\newtheorem{thm}{Theorem}[section]
\newtheorem{prop}{Proposition}[section]
\newtheorem{lemma}{Lemma}[section]
\theoremstyle{definition}
\newtheorem{defn}{Definition}[section]
\newtheorem{rmk}{Remark}[section]
\newcommand{\intprod}{\;\rule{5pt}{.3pt}\rule{.3pt}{7pt}\;}
\newcommand{\scriptintprod}{\;\rule{3pt}{.3pt}\rule{.3pt}{5pt}\;}
\newcommand{\mapsisoto}{
   \makebox[0pt][l]{\raisebox{5pt}{\hspace*{3.8pt}$\simeq$}}\longrightarrow}
\DeclareSymbolFont{script}{U}{eus}{m}{n}
\DeclareMathSymbol{\Wedge}{0}{script}{"5E}
\begin{document}
\title[Sub-Riemannian contact manifolds]
{A canonical connection on sub-Riemannian contact manifolds}
\author[Michael Eastwood]{Michael Eastwood}
\address{\hskip-\parindent
School of Mathematical Sciences\\
University of Adelaide,\newline
SA 5005, Australia}
\email{meastwoo@member.ams.org}
\author[Katharina Neusser]{Katharina Neusser}
\address{\hskip-\parindent
Eduard \v Cech Institute $\&$ Mathematical Institute\\ 
Charles University,\newline
Prague, Czech Republic}
\email{katharina.neusser@gmail.com}
\subjclass{Primary 53C17; Secondary 53D10, 70G45}
\begin{abstract}
We construct a canonically defined affine connection in sub-Riemannian contact
geometry. Our method mimics that of the Levi-Civita connection in Riemannian
geometry. We compare it with the 
Tanaka-Webster connection in the three-dimensional case. 
\end{abstract}
\renewcommand{\subjclassname}{\textup{2010} Mathematics Subject Classification}
\thanks{The second author was supported by the Eduard \v Cech Institute.}
\mbox{ }

\vspace{50pt}
\maketitle

\section{Introduction}\label{intro}
Let $M$ be a contact manifold with contact distribution~$H$. Necessarily, the
dimension of $M$ is odd. The $3$-dimensional case is special and we shall often
concentrate our discussion on this case. For example, as observed
in~\cite{fgv}, the notion of a sub-Riemannian structure in this case coincides
with Webster's notion of a pseudo-Hermitian structure~\cite{w}. {From} this
point of view, there is the well-known Tanaka-Webster connection~\cite{t,w}, a
canonically defined affine connection on pseudo-Hermitian manifolds in all
dimensions but, in particular, on sub-Riemannian manifolds in dimension~$3$. We
shall discuss this connection in detail~\S\ref{others} and compare it with what
we constructed earlier in~\S\ref{ours}. There is yet another natural connection
for sub-Riemannian contact structures due to Morimoto~\cite{m}. In contrast to
the canonical connection we construct in~\S\ref{ours}, this one requires
`constant symbol,' which is however a vacuous condition in dimension~$3$.

We admit right away that our aim here is not to discuss `the equivalence
problem' for sub-Riemannian contact structures in the sense of Cartan nor study
`Jacobi curves' in sub-Riemannian geometry the sense of Agrachev and
Zelenko~\cite{az}. Instead, our more modest aim is to discuss and construct
connections and partial connections (where, in the first instance, one
differentiates only the $H$-directions) as an invariant calculus in
sub-Riemannian contact geometry, closely mimicking the construction of the
Levi-Civita connection in the Riemannian setting.

\section{Generalities on contact manifolds}
Let $M$ be a smooth manifold with tangent bundle $TM\to M$ and suppose
$H\subset TM$ is a codimension~$1$ smooth subbundle. Equivalently, and this is
our preferred point of view, we are given a smooth line subbundle 
$L\subset\Wedge^1$, where $\Wedge^1=T^*M$ is the bundle of $1$-forms on~$M$. 
Thus, we have dual exact sequences
$$0\to H \to TM \to L^*\to 0$$
and
\begin{equation}\label{L}0\to L\to\Wedge^1\to\Wedge_H^1\to 0.\end{equation}
\subsection{The Levi form}
The sequence (\ref{L}) induces a short exact sequence
$$0\to \Wedge_H^1\otimes L\to\Wedge^2\to\Wedge_H^2\to 0,$$
where $\Wedge^2_H=\Wedge^2(\Wedge_H^1)$ and we may now consider the diagram
$$\begin{array}{ccccccccc}
0&\to&L&\to&\Wedge^1&\to&\Wedge_H^1&\to&0\\
&&&&{\scriptstyle d}\!\downarrow\phantom{d}\\
0&\to&\!\!\Wedge_H^1\otimes L\!\!&\to&\Wedge^2&\to&\Wedge_H^2&\to&0,
\end{array}$$
where $d:\Wedge^1\to\Wedge^2$ is the exterior derivative. {From} the Leibniz 
rule, it follows that the composition 
$$L\to\Wedge^1\xrightarrow{\,d\,}\Wedge^2\to\Wedge_H^2$$
is a homomorphism of vector bundles. 

\begin{defn} This composition ${\mathcal{L}}\in\Gamma(\Wedge_H^2\otimes L^*)$
is called the {\em Levi form\/} of $H$. (It is the obstruction to the
integrability of~$H$.)
\end{defn}

\begin{defn} If ${\mathcal{L}}$ is non-degenerate, then $(M,H)$ is said to be 
a {\em contact manifold}. (It follows that $M$ is odd-dimensional.) 
\end{defn}
If $\theta\in\Gamma(L)\subset\Gamma(\Wedge^1)$ is nowhere vanishing, 
non-degeneracy of the Levi form is equivalent to
$$\theta\wedge(d\theta)^n\equiv
\theta\wedge \underbrace{d\theta\wedge\cdots\wedge d\theta}_n\not=0,$$
where $2n+1$ is the dimension of~$M$.

\begin{defn} On a smooth manifold of dimension $2n+1$, a {\em contact form\/} 
is a smooth $1$-form $\theta$ such that $\theta\wedge(d\theta)^n\not=0$.
\end{defn}

\begin{rmk} Some authors define a contact manifold as a smooth manifold 
equipped with a contact form. In this article, however, a contact form is an 
extra choice.
\end{rmk}

\subsection{Partial connections}
On a contact manifold it is natural to consider differentiation in the contact
directions, i.e.\ along~$H$. According to (\ref{L}), this is equivalent to 
considering the composition
$$\Wedge^0\xrightarrow{\,d\,}\Wedge^1\to\Wedge_H^1,$$
which we denote by $d_H:\Wedge^0\to\Wedge_H^1$. A consequence of the 
contact condition is that $H$ is {\em bracket generating\/} and a consequence 
of this is that the kernel of $d_H$ consists of locally constant functions. 
\begin{rmk}
In~\cite{r} it is shown that $d_H:\Wedge^0\to\Wedge_H^1$, is the first operator
in an invariantly defined locally exact complex, known as the {\em Rumin\/}
complex. It is an effective replacement for the de~Rham complex (see
also~\cite{begn}).
\end{rmk}
\begin{defn} Suppose $M$ is a smooth contact manifold and $V$ is a smooth 
vector bundle on~$M$. A {\em partial connection} on $V$ is a differential 
operator
$$\nabla_H:V\to\Wedge_H^1\otimes V\quad\mbox{s.t.}\enskip
\nabla_H(f\sigma)=f\nabla_H\sigma + d_H\!f\otimes\sigma,$$
for all $f\in\Gamma(\Wedge^0)$ and $\sigma\in\Gamma(E)$.   
\end{defn}
\begin{rmk}A partial connection determines a differential operator
$$\nabla_H:\Wedge^1\otimes V\to\Wedge_H^2\otimes V$$
characterised by 
$$\nabla_H(\omega\otimes\sigma)=
d_H\omega\otimes\sigma-\omega_H\wedge\nabla_H\sigma,$$
where $d_H:\Wedge^1\to\Wedge_H^2$ is the composition 
$\Wedge^1\xrightarrow{\,d\,}\Wedge^2\to\Wedge_H^2$ and 
$\omega_H$ the image of $\omega$ under the projection
$\Wedge^1\to\Wedge_H^1$. 
\end{rmk}
In fact, a partial connection on any bundle on any contact manifold may be
promoted to a full connection as follows. The Levi form 
${\mathcal{L}}:L\to\Wedge_H^2$ is non-degenerate and so has 
an `inverse' ${\mathcal{L}}^{-1}:\Wedge_H^2\to L$ (defined on all of 
$\Wedge_H^2$ and an inverse on the range of~${\mathcal{L}}$). 
\begin{prop}\label{promotion}
A partial connection $\nabla_H$ on $V$ uniquely determines a connection 
$\nabla$ on $V$ with the following two properties.
\begin{itemize}
\item the composition 
$V\xrightarrow{\,\nabla\,}\Wedge^1\otimes V\to\Wedge_H^1\otimes V$ agrees 
with~$\nabla_H$,
\item the composition
$V\xrightarrow{\,\nabla\,}\Wedge^1\otimes V\xrightarrow{\,\nabla_H}
\Wedge_H^2\otimes V\xrightarrow{\,{\mathcal{L}}^{-1}\otimes{\mathrm{Id}}\,}
L\otimes V$ vanishes.
\end{itemize}
\end{prop}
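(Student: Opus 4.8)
The plan is to realise the required connection as the unique zero of a naturally associated affine map. First I would observe that the full connections $\nabla$ on $V$ satisfying the first bullet point — those whose $\Wedge^1_H\otimes V$-component is $\nabla_H$ — form a nonempty affine space modelled on $\Gamma(L\otimes\mathrm{End}\,V)$. Nonemptiness is local: a local contact form $\theta$ and its Reeb field $r$ split $\Wedge^1=L\oplus\Wedge^1_H$, and one checks directly that $\nabla\sigma:=\nabla_H\sigma+\theta\otimes\widetilde\nabla_r\sigma$ is a connection restricting to $\nabla_H$ for any auxiliary connection $\widetilde\nabla$ (using $df=d_Hf+(rf)\theta$); since the admissible connections form a convex set these patch. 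That the space is modelled on $\Gamma(L\otimes\mathrm{End}\,V)$ holds because the difference of two admissible connections is $C^\infty$-linear and, by the first bullet point, takes values in $\ker(\Wedge^1\to\Wedge^1_H)\otimes\mathrm{End}\,V=L\otimes\mathrm{End}\,V$.

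Next I would use the operator $\nabla_H:\Wedge^1\otimes V\to\Wedge^2_H\otimes V$ of the preceding remark to define
$$\Psi(\nabla):=(\mathcal{L}^{-1}\otimes{\mathrm{Id}})\circ\nabla_H\circ\nabla\colon V\to L\otimes V.$$
A short Leibniz calculation shows that $\sigma\mapsto\nabla_H(\nabla\sigma)$ is $C^\infty$-linear: the term $d_H(df)$ vanishes since $d^2=0$, and the two resulting copies of $d_Hf\wedge\nabla_H\sigma$ cancel once the first bullet point is used to identify the $\Wedge^1_H\otimes V$-part of $\nabla\sigma$ with $\nabla_H\sigma$. Hence $\Psi(\nabla)\in\Gamma(L\otimes\mathrm{End}\,V)$, and the second bullet point is exactly the equation $\Psi(\nabla)=0$.

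The crux is to compute how $\Psi$ responds to an admissible change $\Theta\in\Gamma(L\otimes\mathrm{End}\,V)$ of the extension. Writing $\Theta=\theta\otimes\Phi$ locally, the defining formula of the remark gives $\nabla_H(\theta\otimes\Phi\sigma)=d_H\theta\otimes\Phi\sigma-\theta_H\wedge\nabla_H(\Phi\sigma)$; here $\theta_H=0$ since $\theta\in\Gamma(L)=\ker(\Wedge^1\to\Wedge^1_H)$, while $d_H\theta=\mathcal{L}(\theta)$ by the very definition of the Levi form. Applying $\mathcal{L}^{-1}\otimes{\mathrm{Id}}$ and using that $\mathcal{L}^{-1}\mathcal{L}={\mathrm{Id}}$ on the range of $\mathcal{L}$ then yields the clean transformation law $\Psi(\nabla+\Theta)=\Psi(\nabla)+\Theta$; in particular the local factor $\theta$ drops out and the conclusion is invariant. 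Thus $\Psi$ is affine with linear part the identity on $\Gamma(L\otimes\mathrm{End}\,V)$, so for any admissible $\nabla_0$ the connection $\nabla:=\nabla_0-\Psi(\nabla_0)$ is the unique admissible one with $\Psi(\nabla)=0$, proving both existence and uniqueness. I expect this last computation — isolating the factor $\mathcal{L}(\theta)$ and inverting it — to be the main obstacle, since it is precisely where non-degeneracy of the Levi form enters and where the identity $\Psi(\nabla+\Theta)=\Psi(\nabla)+\Theta$ is secured.
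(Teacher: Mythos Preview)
Your argument is correct. The paper itself does not actually prove this proposition in the text: its entire proof reads ``See \cite[Proposition~3.5]{eg}'', deferring to Eastwood--Gover's \emph{Prolongations on contact manifolds}. So there is no in-text argument to compare against; what you have written supplies precisely the content the paper omits. The route you take --- viewing the admissible lifts as an affine space over $\Gamma(L\otimes\mathrm{End}\,V)$, verifying that $\Psi(\nabla)=({\mathcal L}^{-1}\otimes{\mathrm{Id}})\circ\nabla_H\circ\nabla$ is $C^\infty$-linear, and then computing the transformation law $\Psi(\nabla+\Theta)=\Psi(\nabla)+\Theta$ --- is the natural one and is essentially what appears in the cited reference. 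Your two key computations are both sound: the tensoriality of $\Psi$ follows exactly as you say from $d_H(df)=0$ together with $(\nabla\sigma)_H=\nabla_H\sigma$, and the identity $\nabla_H(\theta\otimes\Phi\sigma)={\mathcal L}(\theta)\otimes\Phi\sigma$ uses precisely $\theta_H=0$ and the definition of the Levi form, which is where non-degeneracy enters.
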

\begin{proof} See~\cite[Proposition~3.5]{eg}.
\end{proof}
\subsection{Partial torsion}
Suppose $\nabla_H:\Wedge^1\to\Wedge_H^1\otimes\Wedge^1$ is a partial connection
on the cotangent bundle. Then we have two linear differential operators between
the same bundles, namely
\begin{equation}\label{two_operators}\begin{array}{rl}
\bullet&d_H:\Wedge^1\to\Wedge_H^2,\\
\bullet&\mbox{the composition } 
\Wedge^1\xrightarrow{\,\nabla_H\,}\Wedge_H^1\otimes\Wedge^1\to
\Wedge_H^1\otimes\Wedge_H^1\to\Wedge_H^2,
\end{array}\end{equation}
both of which satisfy a Leibniz rule, e.g.
$$d_H(f\omega)=fd_H\omega+d_H\!f\wedge\omega_H.$$ 

\begin{defn}The difference between the two differential operators in
(\ref{two_operators}) is called the {\em partial torsion\/} $\tau_H$
of~$\nabla_H$. The Leibniz rule implies that it is a homomorphism of bundles, 
equivalently $\tau_H\in\Gamma(\Wedge_H^2\otimes TM)$.
\end{defn}
\begin{rmk}
If $\nabla_H$ preserves $L$, then the projection of $\tau_H$ to 
$\Wedge_H^2\otimes L^*$ is the Levi form ${\mathcal{L}}$ of~$H$.
\end{rmk}
\begin{lemma}\label{remove_partial_torsion}
On any smooth contact manifold, there are partial connections on $\Wedge^1$ 
with vanishing partial torsion. 
\end{lemma}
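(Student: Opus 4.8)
The plan is to mimic the classical construction of torsion-free connections: start with an arbitrary partial connection on $\Wedge^1$ and then correct it by an algebraic tensor chosen to cancel the partial torsion. First I would check that partial connections on $\Wedge^1$ exist at all. Choosing any linear connection $\nabla$ on $\Wedge^1$ (say, by a partition of unity) and composing with the projection $\Wedge^1\otimes\Wedge^1\to\Wedge^1_H\otimes\Wedge^1$ on the first factor produces an operator $\nabla_H\colon\Wedge^1\to\Wedge^1_H\otimes\Wedge^1$; since the Leibniz rule only involves the $\Wedge^1_H$-component of~$d$, this $\nabla_H$ satisfies $\nabla_H(f\omega)=f\nabla_H\omega+d_Hf\otimes\omega$ and so is a partial connection.

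Next I would record the affine structure on the space of such partial connections. If $\nabla_H$ and $\tilde\nabla_H=\nabla_H+\Phi$ are both partial connections on $\Wedge^1$, the Leibniz terms $d_Hf\otimes\omega$ cancel in the difference, so $\Phi$ is tensorial, $\Phi\in\Gamma(\Wedge^1_H\otimes\Wedge^1\otimes TM)$; conversely, adding any such $\Phi$ to a partial connection again yields one. I would then compute how $\tau_H$ responds to this correction. Of the two operators in~(\ref{two_operators}), the first, $d_H$, is independent of $\nabla_H$, while replacing $\nabla_H$ by $\nabla_H+\Phi$ alters the second by the purely algebraic composition
$$\Wedge^1\xrightarrow{\,\Phi\,}\Wedge^1_H\otimes\Wedge^1\to\Wedge^1_H\otimes\Wedge^1_H\to\Wedge^2_H,$$
in which the middle arrow projects the second factor via $\Wedge^1\to\Wedge^1_H$ and the last arrow is the wedge. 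Writing $\alpha(\Phi)\in\Gamma(\Wedge^2_H\otimes TM)$ for this operation, we obtain $\tau_H(\nabla_H+\Phi)=\tau_H(\nabla_H)+\alpha(\Phi)$.

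The crux is then to see that $\alpha\colon\Gamma(\Wedge^1_H\otimes\Wedge^1\otimes TM)\to\Gamma(\Wedge^2_H\otimes TM)$ is surjective, for then I may solve $\alpha(\Phi)=-\tau_H(\nabla_H)$ and conclude that $\nabla_H+\Phi$ has vanishing partial torsion. This is a pointwise question of linear algebra in which the $TM$-slot is a passive parameter: because the projection $\Wedge^1\to\Wedge^1_H$ is surjective and the wedge $\Wedge^1_H\otimes\Wedge^1_H\to\Wedge^2_H$ is surjective, their composite $\Wedge^1_H\otimes\Wedge^1\to\Wedge^2_H$ is surjective, so every element of $\Wedge^2_H\otimes TM$ is realised as some $\alpha(\Phi)$.

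I expect the only delicate part to be the bookkeeping that identifies $\alpha$ correctly---confirming that $d_H$ is untouched and that the change is exactly the antisymmetrised projection displayed above---after which surjectivity, and hence the lemma, is immediate. I note that cancelling the full partial torsion forces $\nabla_H$ \emph{not} to preserve~$L$, in accordance with the preceding remark: were $L$ preserved, the $L^*$-component of $\tau_H$ would equal the non-degenerate Levi form $\mathcal{L}$ and could not be removed.
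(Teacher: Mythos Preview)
Your argument is correct and follows essentially the same route as the paper's proof: start from an arbitrary partial connection (obtained via partition of unity), observe that the freedom is an arbitrary homomorphism $\Phi\colon\Wedge^1\to\Wedge_H^1\otimes\Wedge^1$, and note that the induced change in $\tau_H$ is the surjective skew map $\Wedge_H^1\otimes\Wedge^1\to\Wedge_H^2$, so one can always kill the partial torsion. Your write-up is somewhat more explicit about existence and surjectivity than the paper's, and your closing observation about $L$ not being preserved is a nice gloss, but the underlying idea is the same.
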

\begin{proof} By partition of unity one can choose a partial connection 
$\nabla_H$ on $\Wedge^1$ and then the general such partial connection is of 
the form $\nabla_H-\Gamma_H$ for an arbitrary homomorphism 
$\Gamma_H:\Wedge^1\to\Wedge_H^1\otimes\Wedge^1$. The partial torsion $\tau_H$ 
of $\nabla_H$ is then modified by the composition
$\Wedge^1\xrightarrow{\,\Gamma_H\,}\Wedge_H^1\otimes\Wedge^1\to\Wedge_H^2$
and so we may always adopt such a modification to ensure that $\nabla_H$ is
partially torsion-free, as required.\end{proof}
\begin{rmk}The remaining freedom in choosing a partially torsion-free 
connection on $\Wedge^1$ is $\nabla_H\mapsto\nabla_H-\Gamma_H$, where 
$$\Gamma_H:\Wedge^1\to\ker:\Wedge_H^1\otimes\Wedge^1\to\Wedge_H^2$$
is arbitrary.\end{rmk}

Now let us suppose that $\theta\in\Gamma(L)$ is nowhere vanishing. Such a
contact form may be used to effect a number of normalisations. Firstly, the
line bundle $L$ is trivialised. Secondly, a vector field $T$, called the {\em
Reeb\/} field, may be uniquely characterised by 
\begin{equation}\label{reeb}T\intprod\theta=1\qquad T\intprod d\theta=0.
\end{equation}
Consequently, the short exact sequence (\ref{L}) splits and we may write
\begin{equation}\label{splitting}\Wedge^1=\begin{array}{c}\Wedge_H^1\\ \oplus\\
\Wedge^0\end{array}\enskip\mbox{by means of}\quad\omega\mapsto
\left[\begin{array}{c}\omega_H\\ T\intprod\omega\end{array}\right].
\end{equation}
Equivalently, we may identify $\Wedge_H^1$ as a subbundle of $\Wedge^1$ by 
means of
$$\Wedge_H^1=
\ker:\Wedge^1\xrightarrow{\,T\scriptintprod\underbar{\enskip}\,}\Wedge^0$$
and $\Wedge_H^2$ as a subbundle of $\Wedge^2$ by means of
$$\Wedge_H^2=
\ker:\Wedge^2\xrightarrow{\,T\scriptintprod\underbar{\enskip}\,}\Wedge^1.$$
In particular, the $2$-form $d\theta$ may be viewed as a section of
$\Wedge_H^2$. It coincides with the image of $\theta$ under the Levi form
${\mathcal{L}}:L\to\Wedge_H^2$. Thus, in the presence of a contact form
$\theta$, we obtain a non-degenerate $2$-form 
$\Omega\equiv d\theta\in\Gamma(\Wedge_H^2)$ on the contact distribution~$H$. In
any case, we may use the splitting (\ref{splitting}) to insist that a partial
connection on $\Wedge^1$ have the form
\begin{equation}\label{we_insist}
\Wedge^1=\begin{array}{c}\Wedge_H^1\\ \oplus\\ 
\Wedge^0\end{array}\ni
\left[\begin{array}{c}\sigma\\ \rho\end{array}\right]
\stackrel{\nabla_H}{\longmapsto}
\left[\begin{array}{c}D_H\sigma+\Omega\rho\\ 
d_H\rho\end{array}\right]\in\begin{array}{c}\Wedge_H^1\otimes\Wedge_H^1\\ 
\oplus\\ \Wedge_H^1\end{array},\end{equation} 
where $D_H:\Wedge_H^1\to\Wedge_H^1\otimes\Wedge_H^1$ is a partial connection
on~$\Wedge_H^1$. The form of $\nabla_H$ ensures that its partial torsion lies
in~$\Wedge_H^2\otimes H$. Therefore, if we argue as in the proof of
Lemma~\ref{remove_partial_torsion} to remove the remaining partial torsion,
then we have shown the following: 
\begin{prop}\label{contact_to_the_max}If a contact form $\theta$ is
used to split the $1$-forms as
$$\Wedge^1=\Wedge_H^1\oplus\Wedge^0,$$
then we may find partial connections on $\Wedge^1$ of the form 
\eqref{we_insist} and free from partial torsion. The remaining freedom in 
choosing such connections is
$$\textstyle D_H\mapsto D_H-\Gamma_H\quad\mbox{for}\quad
\Gamma_H:\Wedge_H^1\to\bigodot^2\!\Wedge_H^1$$
an arbitrary homomorphism.\end{prop}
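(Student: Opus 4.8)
The plan is to compute the partial torsion of \eqref{we_insist} explicitly and read off both the existence and the freedom directly, exploiting the contact-form splitting $\Wedge^1=\Wedge_H^1\oplus\Wedge^0$ together with the induced splitting $TM=H\oplus\langle T\rangle$ coming from the Reeb field. Writing a general $1$-form as $\omega=\sigma+\rho\theta$ with $\sigma\in\Gamma(\Wedge_H^1)$ and $\rho=T\intprod\omega\in\Gamma(\Wedge^0)$, the first step is to record $d_H\omega$ in these terms. Since $\theta$ vanishes on $H$ and $T\intprod d\theta=0$, the restriction of $d\omega=d\sigma+d\rho\wedge\theta+\rho\,d\theta$ to $H$ collapses to $d_H\sigma+\rho\,\Omega$; that is, $d_H\omega=d_H\sigma+\rho\,\Omega$ as a section of $\Wedge_H^2$.

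Next I would apply the second operator of \eqref{two_operators} to $\omega$ for the connection \eqref{we_insist}. Projecting $\nabla_H\omega$ onto $\Wedge_H^1\otimes\Wedge_H^1$ retains exactly $D_H\sigma+\Omega\rho$ (the complementary piece $d_H\rho$ lies in $\Wedge_H^1\otimes\Wedge^0$ and is discarded), and alternation then yields $\wedge(D_H\sigma)+\rho\,\Omega$. The point of inserting the term $\Omega\rho$ into \eqref{we_insist} is precisely that its contribution $\rho\,\Omega$ cancels the $\rho\,\Omega$ appearing in $d_H\omega$. Hence the partial torsion has no component along $\langle T\rangle=\Wedge^0$: it reduces to $\tau_H(\omega)=d_H\sigma-\wedge(D_H\sigma)$, which is independent of $\rho$ and is exactly the partial torsion of the partial connection $D_H$ on $\Wedge_H^1$. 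In particular $\tau_H$ lies in $\Wedge_H^2\otimes H$, confirming the claim made just before the statement.

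Having localised the torsion to $\Wedge_H^1$, the existence half now runs as in Lemma~\ref{remove_partial_torsion}. A partition-of-unity choice supplies some $D_H$, and altering it by a homomorphism $\Gamma_H:\Wedge_H^1\to\Wedge_H^1\otimes\Wedge_H^1$ changes the torsion by the composite $\Wedge_H^1\xrightarrow{\,\Gamma_H\,}\Wedge_H^1\otimes\Wedge_H^1\xrightarrow{\,\wedge\,}\Wedge_H^2$. Because the alternation $\Wedge_H^1\otimes\Wedge_H^1\to\Wedge_H^2$ is surjective, choosing $\Gamma_H$ to map onto the prevailing torsion (via any linear splitting of this surjection) removes it, yielding a partially torsion-free connection of the form \eqref{we_insist}.

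For the freedom statement I would observe that two such torsion-free normal forms differ by a $\Gamma_H$ whose image under alternation vanishes, i.e.\ by a $\Gamma_H$ valued in $\ker(\Wedge_H^1\otimes\Wedge_H^1\to\Wedge_H^2)=\bigodot^2\Wedge_H^1$, giving exactly the advertised residual freedom $D_H\mapsto D_H-\Gamma_H$ with $\Gamma_H:\Wedge_H^1\to\bigodot^2\Wedge_H^1$ arbitrary. The one step demanding genuine care is the cancellation in the second paragraph: one must check that alternating the inserted term $\Omega\rho$ reproduces precisely $\rho\,\Omega=\rho\,d\theta$, which is a matter of the normalisation identifying $\Wedge_H^2$ as a subbundle of $\Wedge_H^1\otimes\Wedge_H^1$ (so that alternation is the identity on already-skew tensors). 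Everything else is the formal bookkeeping already used for Lemma~\ref{remove_partial_torsion}.
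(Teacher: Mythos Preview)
Your argument is correct and follows the same route as the paper: the paper's proof consists only of the sentence preceding the proposition, asserting that the form \eqref{we_insist} forces $\tau_H\in\Gamma(\Wedge_H^2\otimes H)$ and then invoking the argument of Lemma~\ref{remove_partial_torsion}. You have simply filled in the explicit verification of the first assertion---the cancellation of the $\rho\,\Omega$ terms---which the paper leaves to the reader, and then carried out the Lemma~\ref{remove_partial_torsion} argument in the bundle $\Wedge_H^1$ rather than $\Wedge^1$.
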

\begin{rmk}The connection dual to (\ref{we_insist}) has the form
$$TM=\begin{array}{c}\Wedge^0\\ \oplus\\ H\end{array}\ni
\left[\begin{array}{c}\lambda\\ X\end{array}\right]
\stackrel{\nabla_H}{\longmapsto}
\left[\begin{array}{c}d_H\lambda+X\intprod\Omega\\ D_HX\end{array}\right].$$
In particular, it follows that
$$\begin{array}{c}\Wedge^0\\ \oplus\\ H\end{array}\ni
\left[\begin{array}{c}1\\ 0\end{array}\right]
\stackrel{\nabla_H}{\longmapsto}
\left[\begin{array}{c}0\\ 0\end{array}\right]\quad\mbox{and}\quad
\begin{array}{c}\Wedge_H^1\\ \oplus\\ 
\Wedge^0\end{array}\ni
\left[\begin{array}{c}0\\ 1\end{array}\right]
\stackrel{\nabla_H}{\longmapsto}
\left[\begin{array}{c}\Omega\\ 0
\end{array}\right].$$
Evidently, these two conditions are sufficient to guarantee that a partial
connection on $\Wedge^1$ have the form (\ref{we_insist}) and
Proposition~\ref{contact_to_the_max} may be invariantly reformulated as
follows.
\end{rmk}
\begin{thm}\label{contact_theorem}
If $\theta$ is a contact form with associated Reeb field~$T$, then
we may find partial connections on the (co-)tangent bundle such that
\begin{itemize}
\item $\nabla_HT=0$,
\item $\nabla_H\theta=(d\theta)_H$,
\item $\nabla_H$ is free from partial torsion,
\end{itemize}
where $(d\theta)_H$ is the image of $d\theta$ under the composition
$$\Wedge^2\hookrightarrow\Wedge^1\otimes\Wedge^1\to\Wedge_H^1\otimes\Wedge^1.$$
The freedom in choosing such a partial connection lies in 
$\Gamma(\bigodot^2\!\Wedge_H^1\otimes H)$.
\end{thm}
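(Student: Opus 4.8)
The plan is to recognise the theorem as the invariant reformulation of Proposition~\ref{contact_to_the_max} promised in the preceding remark. Concretely, I would show that, relative to the splitting \eqref{splitting} cut out by $\theta$, the two conditions $\nabla_HT=0$ and $\nabla_H\theta=(d\theta)_H$ are \emph{equivalent} to $\nabla_H$ having the normal form \eqref{we_insist}. Granting this, Proposition~\ref{contact_to_the_max} immediately supplies a partially torsion-free $\nabla_H$ of that form and pins down the residual freedom, so all that remains is to rewrite that freedom invariantly.

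To set up the equivalence I would write a general partial connection on $\Wedge^1$ in block form with respect to $\Wedge^1=\Wedge_H^1\oplus\Wedge^0$, using this splitting in the source and in the second factor of the target $\Wedge_H^1\otimes\Wedge^1=(\Wedge_H^1\otimes\Wedge_H^1)\oplus\Wedge_H^1$. Every such $\nabla_H$ differs from the block-diagonal partial connection $D_H\oplus d_H$ by a bundle homomorphism $\Psi:\Wedge^1\to\Wedge_H^1\otimes\Wedge^1$, which I would break into its four blocks. Unravelling definitions, \eqref{we_insist} says exactly that the block $\Wedge^0\to\Wedge_H^1\otimes\Wedge_H^1$ is $\rho\mapsto\Omega\rho$, that the two blocks landing in $\Wedge_H^1\otimes\Wedge^0\cong\Wedge_H^1$ vanish, and that the block $\Wedge_H^1\to\Wedge_H^1\otimes\Wedge_H^1$ is unconstrained, since adding it to $D_H$ merely re-chooses the partial connection on $\Wedge_H^1$.

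The heart of the matter is to match the two bulleted conditions to these block constraints; the forward direction is already indicated in the remark, so I would concentrate on the converse. Since $\theta=[0,1]^{t}$ under \eqref{splitting}, evaluating $\nabla_H$ on $\theta$ reads off precisely the two blocks emanating from $\Wedge^0$, whence $\nabla_H\theta=(d\theta)_H=\Omega$ forces the first of these to be $\rho\mapsto\Omega\rho$ and the second to vanish. For $\nabla_HT=0$ I would pass to the dual pairing: contracting the $\Wedge^1$-factor of $\nabla_H\omega$ with $T$ extracts exactly the $\Wedge_H^1\otimes\Wedge^0$-component, and the Leibniz identity $d_H\langle\omega,T\rangle=\langle\nabla_H\omega,T\rangle+\langle\omega,\nabla_HT\rangle$ then shows that $\nabla_HT=0$ is equivalent to the vanishing of both blocks into $\Wedge_H^1\otimes\Wedge^0$. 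Taken together, these four statements are precisely \eqref{we_insist}. I expect the only genuinely fiddly point to be this duality bookkeeping for $\nabla_HT=0$, namely keeping track of which tensor factor is contracted and verifying that it indeed isolates the claimed component.

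Finally, with the equivalence in place, Proposition~\ref{contact_to_the_max} yields a partially torsion-free $\nabla_H$ of the form \eqref{we_insist} whose remaining freedom is $D_H\mapsto D_H-\Gamma_H$ for an arbitrary homomorphism $\Gamma_H:\Wedge_H^1\to\bigodot^2\Wedge_H^1$. To land on the stated answer I would only observe that, because $\Wedge_H^1$ is dual to $H$, such homomorphisms are exactly the sections of $\bigodot^2\Wedge_H^1\otimes H$, which recasts the freedom in the invariant form asserted by the theorem.
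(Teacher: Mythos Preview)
Your proposal is correct and follows exactly the route the paper takes: the theorem is stated there without a separate proof, being declared the invariant reformulation of Proposition~\ref{contact_to_the_max} via the preceding remark, and your argument simply fleshes out the ``evidently'' in that remark by verifying in block form that $\nabla_HT=0$ and $\nabla_H\theta=(d\theta)_H$ together characterise the shape~\eqref{we_insist}. The only thing you add beyond the paper is the explicit duality bookkeeping for $\nabla_HT=0$, which is accurate (and indeed recovers $D=0$ redundantly, as you implicitly note).
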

\begin{rmk} Theorem~\ref{contact_theorem} bears a striking similarity to the
usual story for connections on the (co-)tangent bundle in which torsion-free
connections are free up to $\Gamma(\bigodot^2\!\Wedge^1\otimes TM)$. This
appealing feature is one of our reasons for advocating the construction in
this article.
\end{rmk}

\section{Sub-Riemannian contact geometry}
A sub-Riemannian contact structure on a smooth manifold $M$ is a contact
distribution $H\subset TM$ equipped with a positive-definite symmetric form
$g:\bigodot^2\!H\to{\mathbb{R}}$. We do not suppose any particular
compatibility between $g$ and the Levi form. 
For any chosen contact form
$\theta\in\Gamma(L)\subset\Gamma(\Wedge^1)$, however, we can chose a local 
co-frame for $H$ in which $g\in\Gamma(\bigodot^2\!\Wedge_H^1)$ and 
$\Omega=d\theta\in\Gamma(\Wedge_H^2)$ are simultaneously represented by the 
matrices
\begin{equation}\label{lovely_co-frame}
\left[\begin{array}{ccccc}
1&0&\cdots&0&0\\
0&1&\cdots&0&0\\
\vdots&\vdots&\mbox{\Large${\mathrm{Id}}$}&\vdots&\vdots\\
0&0&\cdots&1&0\\
0&0&\cdots&0&1
\end{array}\right]\quad\mbox{and}\quad
\left[\begin{array}{ccccc}
0&\lambda_1&\cdots&0&0\\
\!-\lambda_1\!&0&\cdots&0&0\\
\vdots&\vdots&\mbox{\Large$\!\ddots\!$}&\vdots&\vdots\\
0&0&\cdots&0&\lambda_n\\
0&0&\cdots&\!-\lambda_n\!&0
\end{array}\right],\end{equation}
respectively.
\begin{prop}\label{normalise_theta}
Locally, we can always choose a contact form $\theta$ so that
$$\|\Omega\|^2=2n,\quad\mbox{equivalently}\quad
\lambda_1{}^2+\cdots+\lambda_n{}^2=n.$$
With this normalisation $\theta$ is then determined up to sign.
\end{prop}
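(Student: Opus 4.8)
The plan is to pin down how $\Omega=(d\theta)_H$ transforms under a change of contact form and then to solve a pointwise equation for the scaling factor. Since $L\subset\Wedge^1$ is a line bundle, every other local nowhere-vanishing contact form is $\tilde\theta=f\theta$ for a nowhere-vanishing smooth function $f$, and conversely each such $f$ yields a contact form. First I would compute $\tilde\Omega=(d\tilde\theta)_H$: by the Leibniz rule $d(f\theta)=df\wedge\theta+f\,d\theta$, and projecting to $\Wedge_H^2$ kills the first term because $\theta\in L=\ker(\Wedge^1\to\Wedge_H^1)$, so $\theta_H=0$. Hence $\tilde\Omega=f\Omega$.

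Next, observe that the metric $g$ on $H$ belongs to the sub-Riemannian data and is independent of the choice of $\theta$; therefore the induced fibre metric on $\Wedge_H^2$, and with it the scalar $\|\,\cdot\,\|^2$, is unchanged. Being homogeneous of degree two in its argument, it follows from the previous step that $\|\tilde\Omega\|^2=f^2\|\Omega\|^2$. Reading off the co-frame \eqref{lovely_co-frame} gives $\|\Omega\|^2=2(\lambda_1{}^2+\cdots+\lambda_n{}^2)$, which accounts for the asserted equivalence and, crucially, exhibits $\|\Omega\|^2$ as $2$ times a sum of squares. Thus $\|\Omega\|^2$ is a strictly positive smooth function, vanishing only where all $\lambda_i=0$, i.e.\ only where $\Omega$ degenerates --- which the contact condition forbids.

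With positivity established, the normalisation reduces to the algebraic equation $f^2\|\Omega\|^2=2n$, solved by $f=\pm\sqrt{2n/\|\Omega\|^2}$. Since $\|\Omega\|^2>0$ is smooth, this $f$ is a smooth nowhere-vanishing function, unique up to an overall sign, and then $\tilde\theta=f\theta$ satisfies $\|\tilde\Omega\|^2=2n$. For uniqueness, if $f\theta$ and $f'\theta$ both satisfy the normalisation then $(f')^2=f^2=2n/\|\Omega\|^2$, so $f'=\pm f$ by continuity, whence the normalised contact form is determined up to sign. I do not anticipate a genuine obstacle here: everything hinges on the transformation law $\tilde\Omega=f\Omega$ and on taking a positive square root, and the only point needing care --- the strict positivity of $\|\Omega\|^2$ --- is precisely where the non-degeneracy of the Levi form is used.
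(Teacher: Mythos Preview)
Your argument is correct and follows the same route as the paper: compute the transformation law $\tilde\Omega=f\Omega$ via the Leibniz rule and the vanishing of $\theta$ on~$H$, then solve the resulting scalar equation $f^2\|\Omega\|^2=2n$. You have simply filled in the details the paper leaves implicit under ``the stated normalisation and freedom are clear.''
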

\begin{proof} Replacing $\theta$ by $\hat\theta=\lambda\theta$, for $\lambda$ a
nowhere vanishing smooth function, gives 
$d\hat\theta=\lambda d\theta+d\lambda\wedge\theta$ but, since $\theta$ 
vanishes on~$H$, as far as $\Wedge_H^2$ is concerned we find that 
$\hat\Omega=\lambda\Omega$. The stated normalisation and freedom are clear.
\end{proof}
\begin{rmk} If $M$ is three-dimensional, this normalisation asserts that
$\lambda_1=\pm1$ and a choice of sign corresponds to a choice of orientation
for~$H$. In this case, we may define an endomorphism $J:H\to H$ by 
$$g(JX,Y)=\Omega(X,Y),\quad\forall\;X,Y\in H$$
and our normalisation asserts that $J^2=-{\mathrm{Id}}$. Thus, we have obtained
a CR structure. Conversely, every {\em pseudo-Hermitian\/} structure in the
sense of Webster~\cite{w} arises in this way. More precisely, we have shown 
the following (as already noted in~\cite{fgv}).
\end{rmk}
\begin{prop}\label{3D} In three dimensions, an oriented sub-Riemannian
\mbox{contact}
structure is equivalent to a CR structure with a choice of contact form.
\end{prop}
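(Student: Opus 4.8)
The plan is to spell out the two mutually inverse constructions sketched in the preceding remark, taking care with the orientation and with the sign of the contact form; throughout $\dim M=3$, so $H$ has rank $2$, $n=1$, and $\Wedge_H^2$ is a line bundle. For the passage from an oriented sub-Riemannian contact structure $(H,g)$ to a CR structure, I would first invoke Proposition~\ref{normalise_theta} to select a contact form $\theta$, unique up to sign and with $\|\Omega\|^2=2$; the orientation of $H$ fixes the sign of $\theta$, equivalently the sign of $\lambda_1$ in~\eqref{lovely_co-frame}. I would then define $J\colon H\to H$ by $g(JX,Y)=\Omega(X,Y)$, which is legitimate because $g$ is positive-definite, and observe that the antisymmetry of $\Omega$ makes $J$ skew-adjoint for $g$. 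In the co-frame of~\eqref{lovely_co-frame}, where $g$ is the identity and $\Omega$ is the displayed block, $J$ is represented by that block up to sign, so $J^2=-\lambda_1^2\,\mathrm{Id}$; the normalisation $\lambda_1^2=1$ then yields $J^2=-\mathrm{Id}$. Since $H^{1,0}\subset H\otimes\mathbb{C}$ is a complex line it is trivially involutive, so $(H,J)$ is automatically an integrable CR structure, which together with $\theta$ is the required CR structure with contact form.

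For the reverse passage, given a CR structure $(H,J)$ and a contact form $\theta$, I would set $g(X,Y):=\Omega(X,JY)$ with $\Omega=d\theta$. The key point is that on the $2$-plane $H$ every $2$-form is of type $(1,1)$, because $\Wedge^{2,0}H^*$ vanishes; equivalently $\Omega(JX,JY)=\Omega(X,Y)$, and this $J$-invariance makes $g$ symmetric. Moreover $g$ is automatically definite: in a frame $(e_1,Je_1)$ its matrix is a nonzero multiple of the identity, so choosing the sign of $\theta$ renders it positive-definite, while $J$ simultaneously orients $H$ through $X\mapsto(X,JX)$. This produces an oriented sub-Riemannian contact structure.

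Finally I would verify that the two constructions are mutually inverse. The crucial computation is that, starting from $(H,J,\theta)$, the metric $g=\Omega(\,\cdot\,,J\,\cdot\,)$ satisfies $\|\Omega\|_g^2=2$: taking $f_1$ with $g(f_1,f_1)=1$ and $f_2=Jf_1$ gives an orthonormal frame in which $\Omega(f_1,f_2)=g(f_1,f_1)=1$, so $\theta$ is already the contact form singled out by Proposition~\ref{normalise_theta} for $g$. Hence re-applying the forward construction returns $(H,J,\theta)$, and the opposite composite is checked in the same way, the orientation data matching because a complex structure on a $2$-plane and an orientation of it determine one another. I expect the only delicate point to be this bookkeeping of signs---aligning the orientation on the sub-Riemannian side with the sign of $\theta$ and the orientation induced by $J$---rather than any substantial computation.
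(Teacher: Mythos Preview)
Your proposal is correct and follows essentially the same approach as the paper: the paper's ``proof'' is just the preceding remark, which defines $J$ by $g(JX,Y)=\Omega(X,Y)$ after normalising $\theta$ via Proposition~\ref{normalise_theta}, and observes that $J^2=-\mathrm{Id}$ precisely because $\lambda_1^2=1$, with the converse being the standard pseudo-Hermitian construction $g=\Omega(\,\cdot\,,J\,\cdot\,)$ (made explicit later in the proof of Proposition~5.1). You have simply spelled out the details the paper leaves implicit---the automatic integrability in rank~$2$, the symmetry of $g$ from the $(1,1)$-type of $\Omega$, and the sign/orientation bookkeeping---so there is nothing to correct.
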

\begin{rmk}
Without the contact form, a three-dimensional CR structure coincides with an
oriented `sub-conformal' contact structure (as in~\cite{fgv}).
\end{rmk}

\section{Construction of the partial connection}\label{ours}
The existence and uniqueness of the Levi-Civita connection in Riemannian
geometry is based on the algebraic fact that, for any finite-dimensional vector
space~$V$, the composition
\begin{equation}\label{algebra}
\textstyle\bigodot^2\!V\otimes V\hookrightarrow V\otimes V\otimes V
\xrightarrow{\,
\underbar{\enskip}\otimes\underbar{\enskip}\odot\underbar{\enskip}\,}
V\otimes\bigodot^2\!V\end{equation}
is an isomorphism. The same algebra underlies the following construction.
\begin{thm}\label{mainthm}
On any sub-Riemannian contact manifold,  there is a
unique partial connection $\nabla_H:\Wedge^1\to\Wedge_H^1\otimes\Wedge^1$ with
the following properties.
\begin{itemize}
\item $\nabla_HT=0$,
\item $\nabla_H\theta=(d\theta)_H$,
\item $\nabla_H$ is free from partial torsion,
\item $\nabla_Hg=0$,
\end{itemize}
where $\theta$ is any local contact form normalised as in 
Proposition~\ref{normalise_theta} and $T$ is its associated Reeb field.
\end{thm}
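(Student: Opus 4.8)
The plan is to construct $\nabla_H$ by refining the partial connections supplied by Theorem~\ref{contact_theorem} and then to use the fourth condition $\nabla_Hg=0$ to remove the remaining freedom. Fix a local contact form $\theta$ normalised as in Proposition~\ref{normalise_theta}, with Reeb field~$T$. By Theorem~\ref{contact_theorem} the partial connections on $\Wedge^1$ satisfying the first three conditions form a non-empty affine space, and each is of the form \eqref{we_insist}; by Proposition~\ref{contact_to_the_max} the only remaining freedom lies in the induced partial connection $D_H$ on~$\Wedge_H^1$, with $D_H\mapsto D_H-\Gamma_H$ for an arbitrary homomorphism $\Gamma_H:\Wedge_H^1\to\bigodot^2\!\Wedge_H^1$. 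It therefore suffices to prove that exactly one choice of $\Gamma_H$ yields $\nabla_Hg=0$.

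First I would record that, in the splitting \eqref{splitting} determined by $\theta$, any $\nabla_H$ of the form \eqref{we_insist} preserves the subbundle $\Wedge_H^1\subset\Wedge^1$, acting there as~$D_H$. Since $g\in\Gamma(\bigodot^2\!\Wedge_H^1)$, this gives $\nabla_Hg=D_Hg$, the connection induced by~$D_H$. Writing $\Gamma_H$ in abstract-index notation as $\Gamma_{ab}{}^c$, symmetric in $a,b$ because partial torsion is preserved, the replacement $D_H\mapsto D_H-\Gamma_H$ changes $\nabla_Hg$ by the homomorphism with components $-\Gamma_{ab}{}^dg_{dc}-\Gamma_{ac}{}^dg_{bd}$. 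Hence $\nabla_Hg=0$ is equivalent to the pointwise linear equation $\Gamma_{ab}{}^dg_{dc}+\Gamma_{ac}{}^dg_{bd}=(D_Hg)_{abc}$ for the unknown~$\Gamma_H$, where the first index records the direction of differentiation.

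The crux is that this equation has a unique solution, and this is precisely the content of the isomorphism \eqref{algebra}. Using the non-degeneracy of $g$ to lower the last index, put $\Gamma_{abc}:=\Gamma_{ab}{}^dg_{dc}$, which is symmetric in $a,b$ and so lies in $\bigodot^2\!\Wedge_H^1\otimes\Wedge_H^1$. The equation becomes $\Gamma_{abc}+\Gamma_{acb}=(D_Hg)_{abc}$, whose right-hand side is symmetric in $b,c$; that is, $\Gamma_{abc}\mapsto\Gamma_{abc}+\Gamma_{acb}$ is, up to a factor of~$2$, the map \eqref{algebra} with $V=\Wedge_H^1$. As that map is a linear isomorphism onto $\Wedge_H^1\otimes\bigodot^2\!\Wedge_H^1$, there is a unique $\Gamma_{abc}$, given by the Koszul-type formula $\Gamma_{abc}=\tfrac12\big((D_Hg)_{abc}+(D_Hg)_{bca}-(D_Hg)_{cab}\big)$; raising the last index with the inverse of $g$ recovers the unique admissible~$\Gamma_H$. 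This proves local existence and uniqueness, in exact parallel with the Levi-Civita construction.

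It remains to check independence of the auxiliary data. Proposition~\ref{normalise_theta} fixes $\theta$ only up to sign, and under $\theta\mapsto-\theta$ one has $T\mapsto-T$ and $\Omega\mapsto-\Omega$, so that all four defining conditions are unchanged; the locally defined connections therefore agree on overlaps and patch to a single global partial connection. I expect the only genuine work to be the identification in the preceding paragraph of the variation of $\nabla_Hg$ with the algebraic map \eqref{algebra}; once the bookkeeping of the splitting \eqref{splitting} and the symmetry of $\Gamma_{ab}{}^c$ is in place, existence and uniqueness follow immediately.
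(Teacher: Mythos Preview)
Your proof is correct and follows essentially the same route as the paper: invoke Theorem~\ref{contact_theorem} to secure the first three conditions, identify the residual freedom with $\bigodot^2\!\Wedge_H^1\otimes H$, and then use the algebraic isomorphism~\eqref{algebra} (with $g$ lowering the index) to solve uniquely for the $\Gamma_H$ making $\nabla_Hg=0$. Your version is simply more explicit---writing out the index computation and the Koszul-type formula---whereas the paper compresses this into the single line $\hat\nabla_Hg=\nabla_Hg-2\sigma\Gamma_H$ and inverts~$\sigma$.
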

\begin{proof}
The only freedom in $\theta$ is to change its sign. Evidently, such a change
respects the characterising properties of~$\nabla_H$ so it suffices to work 
locally, choose~$\theta$, and employ Theorem~\ref{contact_theorem} to find a 
global connection with the first three of our required properties and with 
remaining freedom
$$\nabla_H\mapsto\hat\nabla_H=\nabla_H-\Gamma_H,$$
for $\Gamma_H\in\Gamma(\bigodot^2\!\Wedge_H^1\otimes H)$. If we use the
sub-Riemannian metric $g$ to identify $H$ with its dual~$\Wedge_H^1$, and
write $\sigma:\bigodot^2\!\Wedge_H^1\otimes\Wedge_H^1
\mapsisoto\Wedge_H^1\otimes\bigodot^2\!\Wedge_H^1$ for the 
isomorphism~(\ref{algebra}), then 
$$\hat\nabla_Hg=\nabla_Hg-2\sigma\Gamma_H$$
and so $\hat\nabla_Hg=0$ if and only if 
$\Gamma_H=\frac12\sigma^{-1}\nabla_Hg$, which shows both existence and 
uniqueness from our final requirement.
\end{proof}

\section{Other constructions}\label{others}
As noted in~\cite{fgv} and echoed in Proposition~\ref{3D}, sub-Riemannian
geometry in dimension 3 coincides with Webster's pseudo-Hermitian
geometry~\cite{w}. For completeness, we briefly recount the story in 
higher dimensions as follows.

\begin{defn} Suppose $M$ is a smooth manifold of dimension $2n+1$. An 
{\em almost CR structure\/} on $M$ is a vector sub-bundle $H\subset TM$ of rank
$2n$ with an endomorphism $J:H\to H$ such that $J^2=-{\mathrm{Id}}$.
\end{defn}
\begin{defn}
An almost CR structure is said to be {\em non-degenerate\/} if $H$ is a contact
distribution. 
\end{defn}
\begin{defn} An almost CR structure is said to be {\em partially integrable\/} 
if and only if the $L^*$-valued form 
${\mathcal{L}}(X,JY)$ on $H$ is symmetric. 
Equivalently, for any contact form, the ${\mathbb{C}}$-valued form 
$$\Omega(X,JY)-i\Omega(X,Y)$$
on $H$ is Hermitian (and, in this case, non-degeneracy of the CR structure is 
equivalent to non-degeneracy of this Hermitian form).  
\end{defn} 
It is observed in~\cite[p.~414]{cs} that partial integrability is implied by 
the more usual condition of integrability, which may be defined as follows.
\begin{defn}
An almost CR structure is said to be {\em integrable\/} if and only if
$[H^{0,1},H^{0,1}]\subseteq H^{0,1}$ where $H^{0,1}=\{X\in{\mathbb{C}}H\mbox{
s.t.\ }JX+iX=0\}$. Evidently, this condition is vacuous in three dimensions
(for then $H^{0,1}$ is a line bundle). A {\em CR structure\/} is an integrable
almost CR structure.
\end{defn}
\begin{defn}
A {\em pseudo-Hermitian\/} structure is a CR structure equipped with a choice 
of contact form. Such a structure is said to be {\em strictly pseudo-convex\/} 
if and only if the corresponding symmetric form $\Omega(X,JY)$ on $H$ is 
positive-definite.
\end{defn}
\begin{prop} Always, 
$$\begin{array}{l}
\{\mbox{strictly pseudo-convex pseudo-Hermitian structures}\}\\
\hspace{100pt}\subseteq\{\mbox{oriented sub-Riemannian contact structures}\}
\end{array}$$ 
with equality in $3$ dimensions.
\end{prop}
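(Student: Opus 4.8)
The plan is to prove the inclusion by manufacturing, from any strictly pseudo-convex pseudo-Hermitian structure $(H,J,\theta)$, an oriented sub-Riemannian contact structure, and then to argue that in three dimensions this passage is reversible so that the inclusion becomes an equality. For the inclusion, I would take $\Omega=d\theta\in\Gamma(\Wedge_H^2)$ as furnished by the contact form and define
$$g(X,Y):=\Omega(X,JY),\qquad X,Y\in H.$$
Strict pseudo-convexity says exactly that $g$ is positive-definite, so the only point needing care is the symmetry of $g$. Here partial integrability is the key: a CR structure is integrable, hence partially integrable, and the partial integrability condition is precisely that $\mathcal{L}(X,JY)$ — equivalently $\Omega(X,JY)$ — is symmetric. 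Thus $g$ is a genuine sub-Riemannian metric. Positive-definiteness moreover forces $\Omega$ to be non-degenerate on $H$, since $\Omega(X,JX)=g(X,X)>0$ for $X\neq0$, so $H$ is a contact distribution; and the almost complex structure $J$ orients each fibre of $H$ in the usual way. Hence $(H,g)$ with this orientation is an oriented sub-Riemannian contact structure, giving the asserted containment.

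For equality in three dimensions I would invoke Proposition~\ref{3D}, which already matches oriented sub-Riemannian contact structures with CR structures carrying a choice of contact form — that is, with pseudo-Hermitian structures — the integrability condition being vacuous when $n=1$. It therefore remains only to check that the pseudo-Hermitian structure produced from an oriented sub-Riemannian contact structure is automatically \emph{strictly} pseudo-convex. Recall from the remark preceding Proposition~\ref{3D} that, after normalising $\theta$ as in Proposition~\ref{normalise_theta} and fixing the sign by the orientation, the endomorphism $J$ is characterised by $g(JX,Y)=\Omega(X,Y)$ and satisfies $J^2=-\mathrm{Id}$. I would then note that antisymmetry of $\Omega$ makes $J$ skew-adjoint for $g$, so $g(JX,JY)=-g(X,J^2Y)=g(X,Y)$ and $J$ is a $g$-isometry; substituting $Y\mapsto JY$ in the defining relation yields $\Omega(X,JY)=g(JX,JY)=g(X,Y)$, which is positive-definite. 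So the symmetric form $\Omega(X,JY)$ appearing in the strict pseudo-convexity condition is just $g$ itself, and strict pseudo-convexity holds.

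The two constructions are visibly mutually inverse — the metric attached to $(H,J,\theta)$ is $\Omega(\,\cdot\,,J\,\cdot\,)$, while the endomorphism attached to $(H,g,\theta)$ is pinned down by $g(J\,\cdot\,,\cdot\,)=\Omega(\,\cdot\,,\cdot\,)$, and the relation $J^2=-\mathrm{Id}$ makes these recover one another — so in dimension three the inclusion is an equality. I expect the only real subtlety to be bookkeeping with signs and conventions: ensuring that the orientation of $H$ selected by $J$ in the forward direction agrees with the orientation used to fix the sign of $\theta$, and hence of $J$, in the reverse direction, and confirming that the normalisation $\lambda_1=\pm1$ is exactly what yields $J^2=-\mathrm{Id}$ and thereby upgrades pseudo-convexity to \emph{strict} pseudo-convexity. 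Everything else is a routine unwinding of the definitions recorded above, and no claim of equality beyond dimension three is being made, since a general sub-Riemannian metric on $H$ need not arise from any integrable complex structure.
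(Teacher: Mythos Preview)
Your argument is correct and follows the same line as the paper's: take $g=\Omega(\cdot,J\cdot)$, orient $H$ via $J$, and for the three-dimensional converse verify that the normalised $J$ returns $\Omega(\cdot,J\cdot)=g$. The paper compresses this into the normal form~(\ref{lovely_co-frame}), observing that the pseudo-Hermitian case is precisely $\lambda_1=\cdots=\lambda_n=1$ (forced in dimension three by the normalisation of Proposition~\ref{normalise_theta}), while you carry out the same computation coordinate-free.
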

\begin{proof} Using $\Omega(X,JY)$ as a sub-Riemannian metric and using $J$ to 
orient~$H$, it is clear in the co-frames (\ref{lovely_co-frame}) that CR 
geometry corresponds exactly to the case $\lambda_1=\cdots=\lambda_n=1$.
\end{proof}

\subsection{The Tanaka-Webster connection} Since it is only in $3$ dimensions
that pseudo-Hermitian geometry coincides with sub-Riemannian geometry, we shall
confine our discussion to this case. The construction~\cite{t,w} of this
canonical connection, written from the sub-Riemannian point of view, is as
follows. Choose, a local co-frame $\theta,e_1,e_2$ on $M$ such that 
\begin{equation}\label{choose_co-frame}
d\theta=e_1\wedge e_2\quad\mbox{and}\quad Je_2=e_1.\end{equation}
Notice that such a co-frame is determined up to
\begin{equation}\label{change_of_frame}
\left[\begin{array}{c}e_1\\ e_2\end{array}\right]\longmapsto
\left[\begin{array}{c}\hat e_1\\ \hat e_2\end{array}\right]=
\left[\begin{array}{cc}\cos\phi&-\sin\phi\\ \sin\phi&\cos\phi\end{array}\right]
\left[\begin{array}{c}e_1\\ e_2\end{array}\right]\end{equation}
for an arbitrary smooth function~$\phi$.
\begin{lemma} There is a smooth $1$-form $\omega$ and smooth functions $A$ and 
$B$ uniquely characterised by 
\begin{equation}\label{structure_equations}
\begin{array}{rcl}de_1&=&\omega\wedge e_2+A\theta\wedge e_1+B\theta\wedge e_2\\
de_2 &=&-\omega\wedge e_1+B\theta\wedge e_1-A\theta\wedge e_2
\end{array}\end{equation}
\end{lemma}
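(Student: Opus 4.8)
The plan is to exploit the fact that $\{\theta,e_1,e_2\}$ is a coframe, so that the three products $\theta\wedge e_1$, $\theta\wedge e_2$, $e_1\wedge e_2$ form a pointwise basis for the $2$-forms on our $3$-manifold. First I would expand the exterior derivatives in this basis, writing
$$de_1 = a_1\,\theta\wedge e_1 + a_2\,\theta\wedge e_2 + a_3\,e_1\wedge e_2,\qquad de_2 = b_1\,\theta\wedge e_1 + b_2\,\theta\wedge e_2 + b_3\,e_1\wedge e_2$$
for smooth functions $a_i,b_i$, and write the unknown $1$-form as $\omega=p\,\theta+q\,e_1+r\,e_2$. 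Substituting into the right-hand sides of \eqref{structure_equations} and collecting terms reduces the entire lemma to a linear system for the five unknown functions $A$, $B$, $p$, $q$, $r$.

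Carrying out the substitution, the coefficient of $e_1\wedge e_2$ forces $q=a_3$ in the first equation and $r=b_3$ in the second (since $\omega\wedge e_2$ contributes $q\,e_1\wedge e_2$ and $-\omega\wedge e_1$ contributes $r\,e_1\wedge e_2$), while the coefficient of $\theta\wedge e_1$ in the first equation gives $A=a_1$. The two remaining coefficients — that of $\theta\wedge e_2$ in the first equation and that of $\theta\wedge e_1$ in the second — yield the pair $p+B=a_2$ and $-p+B=b_1$, which I would solve to obtain $B=\tfrac12(a_2+b_1)$ and $p=\tfrac12(a_2-b_1)$. This determines $\omega$, $A$ and $B$ completely, so uniqueness is immediate; existence follows as soon as the system is shown to be consistent.

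The one coefficient not yet used is that of $\theta\wedge e_2$ in the second equation, which demands $-A=b_2$; together with $A=a_1$ this is the single scalar constraint $a_1+b_2=0$. This is the main (indeed the only) subtlety, since the system has six scalar equations in five unknowns and so is \emph{a priori} overdetermined. The resolution is that this constraint is supplied not by any choice of the unknowns but by integrability: applying $d$ to the normalisation $d\theta=e_1\wedge e_2$ and using $d^2=0$ gives
$$0 = d(e_1\wedge e_2) = de_1\wedge e_2 - e_1\wedge de_2 = (a_1+b_2)\,\theta\wedge e_1\wedge e_2,$$
whence $a_1+b_2=0$ holds identically. Thus the apparent overdetermination is exactly cancelled by the closedness of $d\theta$, the linear system is consistent with a unique solution, and the lemma follows. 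A final remark I would add is that the claimed invariance of $A$, $B$ and $\omega$ under the rotation \eqref{change_of_frame} can be read off from these formulae, but this is not needed for existence and uniqueness as stated.
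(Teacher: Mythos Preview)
Your argument is correct and follows the same route as the paper: a count of six linear equations in the five unknowns $p,q,r,A,B$, with the single overdetermination resolved by $d^2\theta=0$ via $d(e_1\wedge e_2)=0$. You have simply written out explicitly what the paper leaves as a dimension count; one tiny quibble is that in your closing remark $A$, $B$, $\omega$ are not literally invariant under \eqref{change_of_frame} but transform as in \eqref{resulting_changes}.
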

\begin{proof} At each point, there are seemingly $6$ equations here for $5$
unknowns, namely the $3$ coefficients of $\omega$ together with $A$ and~$B$. 
However, there is one relation namely
$$0=d^2\theta=d(e_1\wedge e_2)=de_1\wedge e_2-de_2\wedge e_1,$$
which is exactly as required by the right hand side 
of~(\ref{structure_equations}).
\end{proof}
Notice that if we change our co-frame according to (\ref{change_of_frame}),
then
$$\begin{array}{rcl}d\hat e_1&=&\hat\omega\wedge\hat e_2+
\hat A\theta\wedge\hat e_1+\hat B\theta\wedge\hat e_2\\
d\hat e_2 &=&-\hat\omega\wedge\hat e_1+
\hat B\theta\wedge\hat e_1-\hat A\theta\wedge\hat e_2,
\end{array}$$
where
\begin{equation}\label{resulting_changes}
\hat\omega=\omega-d\phi\enskip\mbox{ and}\enskip
\left[\begin{array}{c}\hat A\\ \hat B\end{array}\right]=
\left[\begin{array}{cc}\cos2\phi&-\sin2\phi\\ 
\sin2\phi&\cos2\phi\end{array}\right]
\left[\begin{array}{c}A\\ B\end{array}\right].\end{equation}
\begin{thm}[Tanaka-Webster] The connection on $\Wedge^1$ given by
\begin{equation}\label{tanaka-webster_in_our_co-frame}
\nabla\theta=0,\qquad\nabla e_1=\omega\otimes e_2,\qquad
\nabla e_2=-\omega\otimes e_1\end{equation}
in any chosen co-frame, does not depend on this choice.
\end{thm}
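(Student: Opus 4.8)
The plan is to verify that the connection $\nabla$ associated to a co-frame $(\theta,e_1,e_2)$ coincides with the connection $\hat\nabla$ associated to any other admissible co-frame $(\theta,\hat e_1,\hat e_2)$. A connection on $\Wedge^1$ is determined by its values on a local frame, so it is enough to compare the two prescriptions on the frame fields. Since $\theta$ is held fixed and $\nabla\theta=0=\hat\nabla\theta$, the form $\theta$ causes no trouble, and by (\ref{change_of_frame}) the only remaining freedom in $e_1,e_2$ is a rotation through a smooth function~$\phi$. Thus the entire statement reduces to a single local equivariance check.

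Concretely, I would compute $\nabla\hat e_1$ and $\nabla\hat e_2$ directly from the defining rules (\ref{tanaka-webster_in_our_co-frame}) together with the Leibniz rule for~$\nabla$, and confirm that the answers are $\hat\omega\otimes\hat e_2$ and $-\hat\omega\otimes\hat e_1$, which is exactly the prescription defining~$\hat\nabla$. Differentiating $\hat e_1=\cos\phi\,e_1-\sin\phi\,e_2$ produces two kinds of contributions: those in which $\nabla$ acts on $e_1$ and $e_2$, bringing in the connection form~$\omega$, and those in which the derivative falls on the rotation coefficients, bringing in~$d\phi$.

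The one thing that genuinely has to work out is that these $d\phi$ terms combine with $\omega$ to give $\omega-d\phi$, which by the transformation law (\ref{resulting_changes}) is precisely~$\hat\omega$. Concretely, the computation should collapse to $\sin\phi\,(\omega-d\phi)\otimes e_1+\cos\phi\,(\omega-d\phi)\otimes e_2=(\omega-d\phi)\otimes(\sin\phi\,e_1+\cos\phi\,e_2)=\hat\omega\otimes\hat e_2$, with the parallel computation yielding $\nabla\hat e_2=-\hat\omega\otimes\hat e_1$. The conceptual content is that the inhomogeneous term $-d\phi$ in $\hat\omega=\omega-d\phi$ is there exactly to absorb the derivative of the frame rotation.

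I do not anticipate any real obstacle: the assertion is an equivariance statement and the computation is short and essentially forced by the transformation rule (\ref{resulting_changes}), itself already extracted from the structure equations (\ref{structure_equations}). The only care needed is in the trigonometric bookkeeping and in recognising the factored $1$-forms as $\hat e_1$ and $\hat e_2$. Once $\nabla\hat e_1$ and $\nabla\hat e_2$ are seen to agree with the $\hat\nabla$-prescription, and since $\nabla\theta=\hat\nabla\theta=0$, the two connections agree on a frame and hence everywhere, proving independence of the co-frame.
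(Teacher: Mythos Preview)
Your proposal is correct and follows exactly the same approach as the paper's proof, which simply notes that there is no choice in $\theta$ and that the required invariance follows by straightforward computation from $\hat\omega=\omega-d\phi$. You have merely spelled out in detail the computation the paper leaves implicit.
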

\begin{proof}There is no choice in~$\theta$. Otherwise, the required invariance 
follows by straightforward computation from $\hat\omega=\omega-d\phi$.
\end{proof}
We shall now use the co-frame $\theta,e_1,e_2$ and its structure equations 
(\ref{structure_equations}) to
\begin{itemize}
\item compute the torsion of the Tanaka-Webster connection,
\item compute the curvature of the Tanaka-Webster connection,
\item compute the partial connection of Theorem~\ref{mainthm},
\item promote it to a full connection via Proposition~\ref{promotion},
\item and compare these two connections.
\end{itemize}
\subsubsection{Tanaka-Webster torsion}
The torsion of any connection on $\Wedge^1$ is the difference 
between $d:\Wedge^1\to\Wedge^2$ and the composition 
$\Wedge^1\xrightarrow{\,\nabla\,}\Wedge^1\otimes\Wedge^1\to\Wedge^2$.
According to (\ref{structure_equations}), for the Tanaka-Webster connection, 
this is
$$\theta\mapsto e_1\wedge e_2,\qquad
e_1\mapsto A\theta\wedge e_1+B\theta\wedge e_2,\qquad
e_2\mapsto B\theta\wedge e_1-A\theta\wedge e_2,$$
the first of which is just the Levi form and the rest may be written as
\begin{equation}\label{tanaka-webster_torsion}
\left[\begin{array}{c}e_1\\ e_2\end{array}\right]\longmapsto
\theta\wedge\left[\begin{array}{cc}A&B\\ B&\!-A\!\end{array}\right]
\left[\begin{array}{c}e_1\\ e_2\end{array}\right].\end{equation}
Its invariance under change of co-frame (\ref{change_of_frame}) is equivalent 
to the second part of~(\ref{resulting_changes}), which, for 
these purposes may be better rewritten as
$$\left[\begin{array}{cc}\hat A&\hat B\\
\hat B&\!-\hat A\!\end{array}\right]
\left[\begin{array}{cc}\cos\phi&-\sin\phi\\ 
\sin\phi&\cos\phi\end{array}\right]=
\left[\begin{array}{cc}\cos\phi&-\sin\phi\\ 
\sin\phi&\cos\phi\end{array}\right]
\left[\begin{array}{cc}A&B\\ B&\!-A\!\end{array}\right].$$

In the standard expositions, the torsion is usually presented as a
complex-valued quantity, equivalent to $A+iB$.

\subsubsection{Tanaka-Webster curvature}
The curvature of a general connection $E\to\Wedge^1\otimes E$ is the 
composition $E\xrightarrow{\,\nabla\,}\Wedge^1\otimes E
\xrightarrow{\,\nabla\,}\Wedge^2\otimes E$ where
$$\nabla(\alpha\otimes\sigma)=d\alpha\otimes\sigma-\alpha\wedge\nabla\sigma
\quad\mbox{characterises}\quad\nabla:\Wedge^1\otimes E\to\Wedge^2\otimes E.$$
Therefore, we may compute, according to (\ref{tanaka-webster_in_our_co-frame}),
that
$$\begin{array}{rcccccl}
\theta&\xrightarrow{\,\nabla\,}&0\\
e_1&\xrightarrow{\,\nabla\,}&\omega\otimes e_2&\xrightarrow{\,\nabla\,}&
d\omega\otimes e_2-\omega\wedge(-\omega\otimes e_1)&=&d\omega\otimes e_2\\
e_2&\xrightarrow{\,\nabla\,}&-\omega\otimes e_1&\xrightarrow{\,\nabla\,}&
-d\omega\otimes e_1+\omega\wedge(\omega\otimes e_2)&=&-d\omega\otimes e_1
\end{array}$$
In other words, the curvature is determined by $d\omega$. Its invariance is
clear from the first equation of~(\ref{resulting_changes}). In fact, the
curvature provides only one new scalar quantity, namely $d\omega\wedge\theta$,
since $d\omega\wedge e_1$ and $d\omega\wedge e_2$ may be determined in terms of
the torsion by differentiating the structure
equations~(\ref{structure_equations}). It is traditionally captured by the
real-valued function $R$ determined by
\begin{equation}\label{tanaka-webster_curvature}
d\omega\wedge\theta=R\,\theta\wedge e_1\wedge e_2.\end{equation}

\subsection{The partial connection} 
The same co-frame (\ref{choose_co-frame}) may also be used to compute the
partial connection. The characterising properties (\ref{reeb}) of the Reeb
field $T$ show that it is, equivalently, determined by 
$$T\intprod\theta=1\qquad T\intprod e_1=0\qquad T\intprod e_2=0$$
whence the co-frame $\{\theta,e_1,e_2\}$ is compatible with the splitting
(\ref{splitting}). More specifically $\{e_1,e_2\}$ spans
$\Wedge_H^1\hookrightarrow\Lambda^1$ and $\theta$ trivialises
$L\subset\Wedge^1$. Therefore, if we consider the partial connection $\nabla_H$
on $\Wedge^1$ defined by
\begin{equation}\label{our_partial_connection_in_three_dimensions}
\textstyle \nabla_H\theta=e_1\otimes e_2-e_2\otimes e_1\quad
\nabla_He_1=\omega_H\otimes e_2\quad \nabla_He_2=-\omega_H\otimes e_1,
\end{equation}
where $\omega$ is defined by (\ref{structure_equations}) and $\omega_H$ is its 
image in $\Wedge_H^1$, then we ensure that
it has the form (\ref{we_insist}) and is free from partial torsion, as required
by Theorem~\ref{mainthm}. Finally, 
$$\begin{array}{l}
\nabla_H(e_1\otimes e_1+e_2\otimes e_2)\\
\qquad{}=\omega_H\otimes e_2\otimes e_1+\omega_H\otimes e_1\otimes e_2
-\omega_H\otimes e_1\otimes e_2-\omega_H\otimes e_2\otimes e_1=0\end{array}$$
and all characterising properties of Theorem~\ref{mainthm} are satisfied. Thus,
apart from a minor modification whereby $\nabla_H\theta=d\theta$ replaces
$\nabla\theta=0$, the partial connection of Theorem~\ref{mainthm} is induced by
the Tanaka-Webster connection.

\subsubsection{Promotion of the partial connection}
We shall now take the partial connection defined by
(\ref{our_partial_connection_in_three_dimensions}) and promote it to a full
connection on $\Wedge^1$ in line with Proposition~\ref{promotion}. The general
lift of (\ref{our_partial_connection_in_three_dimensions}) to a full connection
is defined by
$$\begin{array}{ccl}
\nabla\theta&=&\theta\otimes\alpha+e_1\otimes e_2-e_2\otimes e_1\\
\nabla e_1&=&\theta\otimes\beta+\omega\otimes e_2\\
\nabla e_2&=&\theta\otimes\gamma-\omega\otimes e_1
\end{array}$$
for $1$-forms $\alpha,\beta,\gamma$ and if we now compute the composition
$$\Wedge^1\xrightarrow{\,\nabla\,}\Wedge^1\otimes\Wedge^1
\xrightarrow{\,\nabla\,}\Wedge^2\otimes\Wedge^1\longrightarrow
\Wedge_H^2\otimes\Wedge^1$$
for this lift, we find that
$$\textstyle\theta\mapsto
e_1\wedge e_2\otimes\alpha
+(de_1+e_2\wedge\omega)_H\otimes e_2
-(de_2-e_1\wedge\omega)_H\otimes e_1
=e_1\wedge e_2\otimes\alpha$$
in accordance with~(\ref{structure_equations}), and then
$$\begin{array}{rcl}
e_1&\mapsto&
e_1\wedge e_2\otimes\beta+(d\omega)_H\otimes e_2\\
e_2&\mapsto&
e_1\wedge e_2\otimes\gamma-(d\omega)_H\otimes e_1.
\end{array}$$
Therefore, we are obliged to take $\alpha=0$ and
$$\beta=-Re_2\quad\mbox{and}\quad\gamma=Re_1,$$
where $(d\omega)_H=R\,e_1\wedge e_2$. In summary, our promoted connection is 
given by 
\begin{equation}\label{our_promoted_connection}
\begin{array}{ccl}
\nabla\theta&=&e_1\otimes e_2-e_2\otimes e_1\\
\nabla e_1&=&(\omega-R\,\theta)\otimes e_2\\
\nabla e_2&=&(R\,\theta-\omega)\otimes e_1
\end{array}\end{equation}
where $R$ is the Tanaka-Webster curvature determined 
by~(\ref{tanaka-webster_curvature}).

\subsection{Comparison}
We may compare the promoted connection~(\ref{our_promoted_connection}) with
Tanaka-Webster. {From} (\ref{tanaka-webster_in_our_co-frame}) and 
(\ref{choose_co-frame}) we find
that their difference tensor, as a homomorphism
$\Wedge^1\to\Wedge^1\otimes\Wedge^1$, is given by
$$\Wedge^1=\begin{array}{c}\Lambda_H^1\\ \oplus\\ \Wedge^0\end{array}
\ni\left[\begin{array}c\sigma\\ \rho\end{array}\right]\mapsto
\left[\begin{array}c
R\,\theta\otimes J\sigma+\Omega\rho\\ 0\end{array}\right],$$
where $R$ is the Webster-Tanaka curvature~(\ref{tanaka-webster_curvature}) and 
the $1$-forms are split by the Reeb field corresponding 
to~$\theta$. 

\begin{rmk}
Recall that the two basic invariants of pseudo-Hermitian geometry are the
torsion (\ref{tanaka-webster_torsion}) and
curvature~(\ref{tanaka-webster_curvature}). Finally, we remark that if we
compute the full torsion of our promoted
connection~(\ref{our_promoted_connection}), then we find $\theta\mapsto 0$ and
$$\left[\begin{array}{c}e_1\\ e_2\end{array}\right]\longmapsto
\theta\wedge\left[\begin{array}{cc}A&B+R\\ B-R&\!-A\!\end{array}\right]
\left[\begin{array}{c}e_1\\ e_2\end{array}\right].$$
In this formula we see the basic invariants appearing together.\end{rmk}

\begin{rmk}
For any strictly pseudo-convex pseudo-Hermitian structure (in any dimension)
there is, apart from the Tanaka--Webster connection, yet another canonical
affine connection, namely the associated Weyl connection defined as in
\cite{cs}. As partial connections on $\Wedge^1$ they coincide, but as full
connections they differ as computed in \cite[Theorem 5.2.13]{cs}. In dimension
$3$ their difference tensor is simply a constant multiple of
$$\left[\begin{array}c\sigma\\ \rho\end{array}\right]\mapsto
\left[\begin{array}c
R\,\theta\otimes J\sigma\\ 0\end{array}\right].$$
\end{rmk}

\end{document}